\newcommand{\R}{\mathbb{R}}
\newcommand{\N}{\mathbb{N}}
\newcommand{\ca}{\mathcal{A}}
\newcommand{\cd}{\mathcal{D}}
\newcommand{\ga}{\gamma}
\newcommand{\si}{\sigma}
\newcommand{\ff}{\varphi}
\newcommand{\poly}{Q}
\newtheorem{thmm}{Theorem}[section]
\newtheorem{theorem}{Theorem}[section]
\newtheorem{lemma}[theorem]{Lemma}
\newtheorem{proposition}[theorem]{Proposition}
\begin{document}
\begin{frontmatter}

\title{Invariance principles for homogeneous sums of free random variables}
\runtitle{Invariance principles for homogeneous sums of free random variables}

\begin{aug}
\author{\fnms{Aur\'elien}~\snm{Deya}\corref{}\thanksref{e1}\ead[label=e1,mark]{aurelien.deya@iecn.u-nancy.fr}}
\and
\author{\fnms{Ivan}~\snm{Nourdin}\thanksref{e2}\ead[label=e2,mark]{inourdin@gmail.com}}
\runauthor{A. Deya and I. Nourdin} 
\address{Institut \'Elie Cartan, Universit\' e
de Lorraine, Campus Aiguillettes, BP 70239, 54506 Vandoeuvre-l\`
es-Nancy, France. \printead{e1,e2}}

\end{aug}

\received{\smonth{1} \syear{2012}}
\revised{\smonth{10} \syear{2012}}

%
\begin{abstract}
We extend, in the free probability framework,
an invariance principle for multilinear homogeneous sums with low influences
recently established by Mossel, O'Donnel and Oleszkiewicz in
[\textit{Ann. of Math.} (2) \textbf{171} (2010) 295--341].
We then deduce
several universality phenomenons, in the spirit of the paper
[\textit{Ann. Probab.} \textbf{38} (2010) 1947--1985] by
Nourdin, Peccati and Reinert.
\end{abstract}

%
\begin{keyword}
\kwd{central limit theorems}
\kwd{chaos}
\kwd{free Brownian motion}
\kwd{free probability}
\kwd{homogeneous sums}
\kwd{Lindeberg principle}
\kwd{universality}
\kwd{Wigner chaos}
\end{keyword}

\end{frontmatter}

\section{Introduction and background}

\textit{Motivation and main goal}.
Our starting point is
the following weak version (which is enough for our purpose) of an
invariance principle for multilinear homogeneous sums with low influences,
recently established in \cite{MOO}.
%
%
\begin{thmm}[(Mossel--O'Donnel--Oleszkiewicz)]\label{MOO}
Let $(\Omega,\mathcal{F},P)$ be a probability space (in the classical sense).
Let $X_1,X_2,\ldots$ (resp., $Y_1,Y_2,\ldots$) be a sequence of
independent centered random variables with unit variance
satisfying
moreover
\[
\sup_{i\geq1}E \bigl[|X_i|^r \bigr]<\infty \qquad\Bigl(\mbox{resp., }\sup_{i\geq
1}E \bigl[|Y_i|^r \bigr]<
\infty \Bigr)\mbox{ for all $r\geq1$}.
\]
Fix $d\geq1$, and consider a sequence of functions $f_N\dvt\{1,\ldots
,N\}^d\to\R$
satisfying the following two assumptions for each $N$ and each
$i_1,\ldots,i_d=1,\ldots,N$:
\begin{enumerate}[(ii)]
\item[(i)] {(full symmetry)} $ f_N(i_1,\ldots,i_d)=f_N(i_{\sigma
(1)},\ldots,i_{\sigma(d)})$
for all $\sigma\in\mathfrak{S}_d$;
%
\item[(ii)] {(normalization)} $ d!\sum_{j_1,\ldots,j_d=1}^N
f_N(j_1,\ldots,j_d)^2=1$.
\end{enumerate}
Also, set
%
%
\begin{equation}
\label{qn} Q_N(x_1,\ldots,x_N)=\sum
_{i_1,\ldots,i_d=1}^N f_N(i_1,
\ldots,i_d) x_{i_1}\cdots x_{i_d}
\end{equation}
and
\[
\operatorname{Inf}_i(f_N)=\sum_{j_2,\ldots,j_{d}=1}^N
f_N(i,j_2,\ldots,j_{d})^2,\qquad i=1,
\ldots,N.
\]
Then, for any integer $m\geq1$,
%
%
\begin{equation}
\label{MOOfree-eq-1} E \bigl[Q_N(X_1,
\ldots,X_N)^m \bigr] - E \bigl[Q_N(Y_1,
\ldots,Y_N)^m \bigr] = \mathrm{O} \bigl(\tau_N^{1/2}
\bigr),
\end{equation}
where $\tau_N=\max_{1\leq i\leq N} \operatorname{Inf}_i(f_N)$.
\end{thmm}

In \cite{MOO}, the authors were motivated by solving two conjectures,
namely the \textit{Majority Is Stablest} conjecture from theoretical
computer science and the
\textit{It Ain't Over Till It's Over} conjecture from social choice theory.
It is worthwhile noting that there is another striking consequence of
Theorem \ref{MOO}, more in the spirit of the classical
central limit theorem. Indeed, in article \cite{NPR} Nourdin, Peccati
and Reinert combined
Theorem \ref{MOO} with the celebrated \textit{Fourth Moment theorem} of
Nualart and Peccati \cite{NP}, and deduced that
multilinear homogenous sums of general centered
independent random variables with unit variance enjoy the following
universality phenomenon.
%
%
\begin{thmm}[(Nourdin--Peccati--Reinert)]\label{NPR}
Let $(\Omega,\mathcal{F},P)$ be a probability space (in the classical sense).
Let $G_1,G_2,\ldots$ be a sequence of i.i.d. $\mathcal{N}(0,1)$ random
variables.
Fix $d\geq2$ and consider a sequence of functions $f_N\dvt\{1,\ldots
,N\}^d\to\R$
satisfying the following three assumptions for each $N$ and each
$i_1,\ldots,i_d=1,\ldots,N$:
\begin{enumerate}[(iii)]
\item[(i)] {(full symmetry)} $ f_N(i_1,\ldots,i_d)=f_N(i_{\sigma
(1)},\ldots,i_{\sigma(d)})$
for all $\sigma\in\mathfrak{S}_d$;
\item[(ii)] {(vanishing on diagonals)} $ f_N(i_1,\ldots,i_d)=0$ if
$i_k=i_l$ for some $k\neq l$;
\item[(iii)] {(normalization)} $ d!\sum_{j_1,\ldots,j_d=1}^N
f_N(j_1,\ldots,j_d)^2=1$.
\end{enumerate}
Also, let $Q_N(x_1,\ldots,x_N)$ be given by
(\ref{qn}).
Then, the following two conclusions are equivalent as $N\to\infty$:
\begin{enumerate}[(A)]
\item[(A)] $Q_N(G_1,\ldots,G_N) \stackrel{\mathrm{law}}{\to} \mathcal{N}(0,1)$;
\item[(B)] $Q_N(X_1,\ldots,X_N)\stackrel {\mathrm{law}}{\to} \mathcal
{N}(0,1)$ for \textup{any} sequence $X_1,X_2,\ldots$ of i.i.d. centered
random variables with unit variance and all moments.
\end{enumerate}
\end{thmm}

In the present paper, our goal is twofold. We shall first extend
Theorem \ref{MOO} in the context of
free probability and we shall then investigate whether a result such as
Theorem \ref{NPR} continues to hold true in this
framework.
We are motivated by the fact that there is often a close
correspondence between classical probability and free probability, in
which the
Gaussian law (resp., the classical notion of independence) has the
semicircular law (resp., the notion of free independence) as an
analogue.

\textit{Free probability in a nutshell}.
Before going into the details and for the sake of clarity, let us first
introduce some of the central concepts in the theory of free probability.
(See \cite{nicaspeicher} for a systematic presentation.)

A \emph{non-commutative probability space} is a von Neumann algebra
$\mathcal{A}$ (i.e.,
an algebra of operators on a real
separable Hilbert space,
closed under adjoint and convergence in the weak operator topology)
equipped with a \emph{trace} $\ff$,
that is, a unital linear functional (meaning preserving the identity)
which is weakly
continuous, positive (meaning $\ff(X)\ge0$
whenever $X$ is a non-negative element of $\mathcal{A}$; i.e.,
whenever $X=YY^\ast$
for some $Y\in\mathcal{A}$), faithful (meaning that if
$\ff(YY^\ast)=0$ then $Y=0$), and tracial (meaning that $\ff(XY)=\ff
(YX)$ for all
$X,Y\in\mathcal{A}$, even though in general $XY\ne YX$).

In a non-commutative probability space, we refer to the self-adjoint
elements of the
algebra as
\emph{random variables}. Any
random variable $X$ has
a \emph{law}: this is the unique
probability measure $\mu$ on $\R$
with the same moments as $X$; in other words, $\mu$ is such that
%
%
\begin{equation}
\label{mu} \int_{\R} \poly(x) \,\mathrm{d}\mu(x) = \ff \bigl(\poly(X)
\bigr)
\end{equation}
for any real polynomial $\poly$.

In a non-commutative probability setting, the central notion of \emph
{free independence} (introduced by
Voiculescu in \cite{Voiculescu})
goes as follows. Let $\mathcal{A}_1,\ldots,\mathcal{A}_p$ be unital
subalgebras of $\mathcal{A}$. Let $X_1,\ldots, X_m$ be elements
chosen among the $\mathcal{A}_i$'s such that, for $1\le j<m$,
two consecutive elements $X_j$ and $X_{j+1}$ do not come from the same
$\mathcal{A}_i$, and
such that $\ff(X_j)=0$ for each $j$. The subalgebras $\mathcal
{A}_1,\ldots,\mathcal{A}_p$ are said to be \emph{free} or \emph{freely
independent} if, in this circumstance,
%
%
\begin{equation}
\label{free-def} \ff(X_1X_2\cdots X_m) = 0.
\end{equation}
Random variables are called freely independent if the unital algebras
they generate are freely independent. If $X,Y$ are freely independent,
then their joint moments
are determined by the moments of $X$ and $Y$ separately
as in the classical case.

The \emph{semicircular distribution}
$\mathcal{S}(m,\sigma^2)$
with mean $m\in\R$ and variance $\sigma^2>0$
is the probability distribution
\[
\mathcal{S} \bigl(m,\sigma^2 \bigr) (\mathrm{d}x) = \frac{1}{2\pi\sigma^2} \sqrt{4
\sigma^2-(x-m)^2} \mathbf{1}_{\{|x-m|\le2\sigma\}} \,\mathrm{d}x.
\]
If $m=0$, this distribution is symmetric around $0$,
and therefore its odd moments are all $0$. A~simple calculation shows
that the even centered moments are given by
(scaled) Catalan numbers: for non-negative integers $k$,
\[
\int_{m-2\sigma}^{m+2\sigma} (x-m)^{2k} \mathcal{S}
\bigl(m,\sigma^2 \bigr) (\mathrm{d}x) = C_k \sigma^{2k},
\]
where $C_k = \frac{1}{k+1} \bigl({{2k}\atop{k}} \bigr)$
(see, e.g., \cite{nicaspeicher}, Lecture 2).

\textit{Our main results}.
We are now in a position to state our first main result, which is
nothing but a suitable generalization
of Theorem \ref{MOO} in the free probability setting.
%
%
\begin{thmm}\label{MOOfree}
Let $(\mathcal{A},\varphi)$ be a non-commutative probability space.
Let $X_1,X_2,\ldots$ (resp., $Y_1,Y_2,\ldots$) be a sequence of centered
free random variables with unit variance
(i.e., such that $\ff(X_i^2)=\ff(Y_i^2)=1$ for all $i$), satisfying
moreover
\[
\sup_{i\geq1}\ff \bigl(|X_i|^r \bigr)<\infty
\qquad \Bigl(\mbox{resp., }\sup_{i\geq1}\ff \bigl(|Y_i|^r
\bigr)< \infty \Bigr) \mbox{ for all $r\geq1$},
\]
where $ |X|= \sqrt{X^* X}$.
Fix $d\geq1$, and consider a sequence of functions $f_N\dvt\{1,\ldots
,N\}^d\to\R$
satisfying the following three assumptions for each $N$ and each
$i_1,\ldots,i_d=1,\ldots,N$:
\begin{enumerate}[(iii)]
\item[(i)] {(mirror-symmetry)} $ f_N(i_1,\ldots
,i_d)=f_N(i_d,\ldots,i_1)$;
\item[(ii)] {(vanishing on diagonals)} $ f_N(i_1,\ldots,i_d)=0$ if
$i_k=i_l$ for some $k\neq l$;
\item[(iii)] {(normalization)} $ \sum_{j_1,\ldots,j_d=1}^N
f_N(j_1,\ldots,j_d)^2=1$.
\end{enumerate}
Also, set
%
%
\begin{equation}
\label{qnfree} Q_N(x_1,\ldots,x_N)=\sum
_{i_1,\ldots,i_d=1}^N f_N(i_1,
\ldots,i_d) x_{i_1}\cdots x_{i_d}
\end{equation}
and
\[
\operatorname{Inf}_i(f_N)=\sum_{l=1}^d
\sum_{j_1,\ldots,j_{d-1}=1}^N f_N(j_1,
\ldots,j_{l-1},i,j_l,\ldots,j_{d-1})^2,\qquad
i=1,\ldots,N.
\]
Then, for any integer $m\geq1$,
%
%
\begin{equation}
\label{MOOfree-eq} \ff \bigl(Q_N(X_1,
\ldots,X_N)^m \bigr) - \ff \bigl(Q_N(Y_1,
\ldots,Y_N)^m \bigr) = \mathrm{O} \bigl(\tau_N^{1/2}
\bigr),
\end{equation}
where $\tau_N=\max_{1\leq i\leq N} \operatorname{Inf}_i(f_N)$.
\end{thmm}

Due to the lack of commutativity of the variables involved, the proof
of Theorem \ref{MOOfree}
raises new difficulties with respect to
its commutative
counterpart. Moreover, it is worthwhile noting that it contains the
free central limit theorem as an immediate corollary.
Indeed, let us choose $d=1$ (in this case, assumptions (i) and (ii)
are of course immaterial),\vspace*{-3pt} $Y_1,Y_2,\ldots\sim\mathcal{S}(0,1)$ and
$f_N(i)=\frac{1}{\sqrt{N}}$, $i=1,\ldots,N$. We then have
$Q_N(Y_1,\ldots,Y_N)\sim\mathcal{S}(0,1)\stackrel{\mathrm{law}}{=}Y_1$
(thanks to (iii) as well as the fact
that a sum of freely independent semicircular random variables remains
semicircular) and $\tau_N\to0$ as $N\to\infty$,
so that, thanks to (\ref{MOOfree-eq}),
\[
\ff \biggl[ \biggl(\frac{X_1+\cdots+X_N}{\sqrt{N}} \biggr)^m \biggr]\to\ff
\bigl(Y_1^m \bigr)
\]
for each $m\geq1$ as $N\to\infty$, which is exactly what the free
central limit theorem asserts.

When $d\geq2$, by combining Theorem \ref{MOOfree} with the main
finding of \cite{KNPS}, we will prove the following free counterpart of
Theorem \ref{NPR}.
%
%
\begin{thmm}\label{NPRfree}
Let $(\mathcal{A},\varphi)$ be a non-commutative probability space.
Let $S_1,S_2,\ldots$ be a sequence of free $\mathcal{S}(0,1)$ random
variables.
Fix $d\geq2$ and consider a sequence of functions $f_N\dvt\{1,\ldots
,N\}^d\to\R$
satisfying the following three assumptions for each $N$ and each
$i_1,\ldots,i_d=1,\ldots,N$:\vadjust{\goodbreak}
\begin{enumerate}[(iii)]
\item[(i)] {(full symmetry)} $ f_N(i_1,\ldots,i_d)=f_N(i_{\sigma
(1)},\ldots,i_{\sigma(d)})$
for all $\sigma\in\mathfrak{S}_d$;
\item[(ii)] {(vanishing on diagonals)} $ f_N(i_1,\ldots,i_d)=0$ if
$i_k=i_l$ for some $k\neq l$;
\item[(iii)] {(normalization)} $ \sum_{j_1,\ldots,j_d=1}^N
f_N(j_1,\ldots,j_d)^2=1$.
\end{enumerate}
Also, let
$Q_N(x_1,\ldots,x_N)$ be the polynomial in non-commuting variables
given by (\ref{qnfree}).
Then, the following two conclusions are equivalent as $N\to\infty$:
\begin{enumerate}[(A)]
\item[(A)] $Q_N(S_1,\ldots,S_N) \stackrel{\mathrm{law}}{\to} \mathcal{S}(0,1)$;
\item[(B)] $Q_N(X_1,\ldots,X_N) \stackrel{\mathrm{law}}{\to} \mathcal
{S}(0,1)$ for any sequence $X_1,X_2,\ldots$ of free identically
distributed and centered random variables with unit variance.
\end{enumerate}
\end{thmm}

Although a weak `mirror-symmetry' assumption would have been
undoubtedly more natural, we impose in Theorem \ref{NPRfree} the same
`full symmetry' assumption (i) than in Theorem \ref{NPR}. This is
unfortunately not insignificant in our non-commutative framework.
But we cannot expect better by using our strategy of proof, as is
illustrated by a concrete counterexample
in Section \ref{secuniversality}.

Theorem \ref{NPRfree} may be seen as
a free universality phenomenon, in the sense that the semicircular
behavior of $Q_N(X_1,\ldots,X_N)$ is
asymptotically insensitive to the distribution of its summands.
In reality, this is more subtle, as the following explicit situation
well illustrates in
the case $d=2$ (quadratic case). Indeed, let us consider
\[
Q_N(x_1,\ldots,x_N)=\frac{1}{\sqrt{2N-2}}\sum
_{i=2}^N (x_1x_i+x_ix_1),\qquad
N\geq2,
\]
let $S_1,S_2,\ldots$ be a sequence of free $\mathcal{S}(0,1)$ random
variables and
let $X_1,X_2,\ldots$ be a sequence of free Rademacher\vspace*{-2pt} random variables
(i.e., the law of $X_1$ is given by
$\frac12 \delta_1 + \frac12\delta_{-1}$).
Then
$Q_N(X_1,\ldots,X_N)\stackrel{\mathrm{law}}{\to} \mathcal{S}(0,1)$ as
$N\to
\infty$,
but
\[
Q_N(S_1,\ldots,S_N)\stackrel{\mathrm{law}} {\to}
\frac{1}{\sqrt{2}}(S_1S_2+S_2S_1)
\not\sim\mathcal{S}(0,1).
\]
(See Section \ref{secuniversality} for the details.)
This means that it is possible to have $Q_N(X_1,\ldots,X_N)$ converging
in law to $\mathcal{S}(0,1)$
for a \textit{particular} centered distribution of $X_1$, without
having the same phenomenon for \textit{every} centered distribution
with variance one. The question of which are the distributions that
enjoy such a universality phenomenon is still an open problem. (In the
commutative case, it is known that the Gaussian and the Poisson
distributions both lead to universality, see \cite{NPR,PZ}. Yet there
are no other examples.)

\textit{Organization of the paper}.
The rest of our paper is organized as follows. In Section \ref
{secuniversality}, we deduce from Theorem \ref{MOOfree} several
results connected with the universality phenomenon and we study the
limitations of Theorem \ref{NPRfree}.
Section \ref{secMOOfree} is devoted to the proof of Theorem~\ref{MOOfree}.

\section{Free universality}\label{secuniversality}

In this section, we show how Theorem \ref{MOOfree} leads to several
results connected with the universality phenomenon. We also study the
limitations of Theorem \ref{NPRfree}:
Can we replace the role played by the semicircular distribution by any
other law? Can we replace the full symmetry
assumption (i) by a more natural one?

To do so, we first need to recall some facts proven in references \cite
{bianespeicher,KNPS}.

\textit{Convergence of Wigner integrals}. For $1\leq p \leq\infty$,
we write $L^p(\mathcal{A},\ff)$ to indicate the $L^p$ space obtained as
the completion of
$\mathcal{A}$ with respect to the norm $\| A\|_p = \ff(|A|^p)^{1/p}$,
where $ |A|= \sqrt{A^\ast A}$, and $\|\cdot\|_\infty$ stands for the
operator norm.
For every integer $q\geq2$, the space $L^2(\mathbb{R}_+^q)$ is the
collection of all real-valued
functions on $\mathbb{R}_+^q$ that are square-integrable with respect
to the Lebesgue measure.
Given $f\in L^2(\mathbb{R}_+^q)$, we write
$f^*(t_1,t_2,\ldots,t_q) = f(t_q,\ldots,t_2,t_1)$,
and we call $f^*$ the \textit{adjoint} of $f$. We say that an element
of $L^2(\mathbb{R}_+^q)$ is \textit{mirror symmetric} whenever
$f = f^*$ as a function.
Given $f\in L^2(\mathbb{R}_+^q)$ and $g\in L^2(\mathbb{R}_+^p)$, for
every $r = 1,\ldots,p\wedge q$
we define the $r$th \textit{contraction} of $f$ and $g$ as the element
of $L^2(\mathbb{R}_+^{p+q-2r})$ given by
%
%
\begin{eqnarray}
\label{contr} &&f{\mathop{\frown}^{r}} g (t_1,
\ldots,t_{p+q-2r})
\nonumber
\\[-8pt]
\\[-8pt]
\nonumber
&&\quad= \int_{\R_+^{p+q-2r}} f(t_1,\ldots,t_{p-r},x_1,
\ldots,x_r)g(x_r,\ldots,x_1,t_{p-r+1},
\ldots,t_{p+q-2r}) \,\mathrm{d}x_1\cdots\, \mathrm{d}x_r.
\end{eqnarray}
One also writes $f{\mathop{\frown}^{0}} g (t_1,\ldots,t_{p+q}) =
f\otimes g
(t_1,\ldots,t_{p+q}) = f(t_1,\ldots,t_q)g(t_{q+1},\ldots,t_{p+q})$. In the
following, we shall use the notation $f{\mathop{\frown}^{0}} g$ and
$f\otimes g$
interchangeably. Observe that, if $p=q$, then $f{\mathop{\frown}^{p}}
g = \langle
f,g^{*}\rangle_{L^2(\R_+^q)}$.

A \textit{free Brownian motion} $S$ on $(\mathcal{A},\ff)$ consists of:
(i) a filtration $\{\mathcal{A}_t \dvt t\geq0\}$ of von Neumann
sub-algebras of $\mathcal{A}$ (in particular, $\mathcal{A}_u \subset
\mathcal{A}_t$ for $0\leq u<t$),
(ii) a collection $S = (S_t)_{t\geq0}$ of self-adjoint operators such that:
\begin{itemize}
\item[--] $S_t\in\mathcal{A}_t$ for every $t$;
\item[--] for every $t$, $S_t$ has a semicircular distribution
$\mathcal
{S}(0,t)$;
\item[--] for every $0\leq u<t$, the increment $S_t - S_u$ is freely
independent of $\mathcal{A}_u$, and has a semicircular distribution
$\mathcal{S}(0,t-u)$.
\end{itemize}

For every integer $q\geq1$, the collection of all random variables of
the type $I_q(f)$,
$f \in L^2(\mathbb{R}_+^q)$, is called the $q$th \textit{Wigner chaos}
associated with $S$, and is defined according to
\cite{bianespeicher}, Section 5.3, namely:
\begin{itemize}
\item[--] first define $I_q(f) = (S_{b_1} - S_{a_1})\cdots(S_{b_q} -
S_{a_q})$ for every function $f$ having the form
%
%
\begin{equation}
\label{esimple} f(t_1,\ldots,t_q) = \mathbf{1}_{(a_1,b_1)}(t_1)
\times\cdots\times\mathbf{1}_{(a_q,b_q)}(t_q),
\end{equation}
where the intervals $(a_i,b_i)$, $i=1,\ldots,q$, are pairwise disjoint;
\item[--] extend linearly the definition of $I_q(f)$ to simple
functions vanishing on diagonals, that is, to functions $f$ that are finite
linear combinations of indicators of the type~(\ref{esimple});
\item[--] exploit the isometric relation
%
%
\begin{equation}
\label{efreeisometry} \bigl\langle I_q(f_1),I_q(f_2)
\bigr\rangle_{L^2(\mathcal{A},\ff)}= \varphi \bigl(I_q(f_1)^*I_q(f_2)
\bigr)= \varphi \bigl(I_q \bigl(f_1^*
\bigr)I_q(f_2) \bigr)= \langle f_1,f_2
\rangle_{L^2(\mathbb{R}_+^q)},
\end{equation}
where $f_1,f_2$ are simple functions vanishing on diagonals, and use a
density argument to define $I_q(f)$ for a general
$f\in L^2(\mathbb{R}_+^q)$.
\end{itemize}

Observe that relation (\ref{efreeisometry}) continues to hold for
every pair $f_1,f_2 \in L^2(\mathbb{R}_+^q)$. Moreover, the above sketched
construction implies that $I_q(f)$ is self-adjoint if and only if $f$
is mirror symmetric. We recall the following fundamental
multiplication formula, proven in \cite{bianespeicher}. For every
$f\in
L^2(\R_+^p)$ and $g\in L^2(\R_+^q)$, where $p,q\geq1$, we have
%
%
\begin{equation}
\label{emult} I_p(f)I_q(g) = \sum
_{r=0}^{p\wedge q} I_{p+q-2r}(f{\mathop{
\frown}^{r}}g).
\end{equation}

Let $S_1, S_2,\ldots\sim\mathcal{S}(0,1)$ be freely independent, fix
$d\geq2$, and consider a sequence of functions $f_N\dvt\{1,\ldots,N\}^d\to
\R$
satisfying assumptions (ii) and (iii) of Theorem \ref{NPRfree} as
well as
%
%
\begin{equation}
\label{mirrorfN} f_N(i_1,\ldots,i_d)=f_N(i_d,
\ldots,i_1) \qquad\mbox{for all }N\geq1\mbox{ and }i_1,
\ldots,i_d\in\{1,\ldots,N\}.
\end{equation}
Let also $Q_N(x_1,\ldots,x_N)$ be the polynomial in non-commuting
variables given by (\ref{qnfree}).
Set $e_i=\mathbf{1}_{[i-1,i]}\in L^2(\R_+)$, $i\geq1$. For each $N$,
one has
%
%
\begin{equation}
\label{qNdiff} Q_N(S_1,\ldots,S_N)
\stackrel{ \rm law} {=}Q_N \bigl(I_1(e_1),
\ldots,I_1(e_N) \bigr).
\end{equation}
By applying the multiplication formula (\ref{emult}) and by taking
into account assumption (ii), it is straightforward to check that
%
%
\begin{equation}
\label{representation} Q_N \bigl(I_1(e_1),
\ldots,I_1(e_N) \bigr) =I_d(g_N),
\end{equation}
where
%
%
\begin{equation}
\label{g-n} g_N = \sum_{i_1,\ldots,i_d=1}^N
f_N(i_1,\ldots,i_d) e_{i_1}\otimes
\cdots\otimes e_{i_d}.
\end{equation}
The function $g_N$ is mirror-symmetric (due to (\ref{mirrorfN})) and
has an $L^2(\R_+^d)$-norm equal to 1 (due to (iii)).
Using both Theorems 1.3 and 1.6 of \cite{KNPS} (see also \cite
{yetanother}), we deduce that the
following equivalence holds true as $N\to\infty$:
%
%
\begin{eqnarray}
\label{4thmomentfree} Q_N(S_1,\ldots,S_N)
\stackrel{\mathrm{law}} {\to} \mathcal{S}(0,1) \Longleftrightarrow\|g_N {
\mathop{\frown}^{r}} g_N\|_{L^2(\R_+^{2d-2r})}\to0
\nonumber
\\[-8pt]
\\[-8pt]
\eqntext{\mbox{for
all $r\in\{ 1,\ldots,d-1 \}$}.\qquad\quad}
\end{eqnarray}
%
For $r=d-1$, observe that
%
%
\begin{eqnarray}\label{influ1}
&&\|g_N{\mathop{\frown}^{d-1}} g_N
\|_{L^2(\R_+^{2})}\nonumber\\
&&\quad= \Biggl\Vert \sum_{i,j=1}^N
\Biggl(\sum_{k_2,\ldots,k_d=1}^N f_N(i,k_2,
\ldots,k_d)f_N(k_d,\ldots,k_2,j)
\Biggr) e_{i}\otimes e_{j}\Biggr\Vert_{L^2(\R_+^{2})}
\nonumber
\\
&&\quad= \sqrt{\sum_{i,j=1}^N \Biggl(
\sum_{k_2,\ldots,k_d=1}^N f_N(i,k_2,
\ldots,k_d)f_N(k_d,\ldots,k_2,j)
\Biggr)^2}
\\
&& \quad\geq\sqrt{\sum_{i=1}^N
\Biggl(\sum_{k_2,\ldots,k_d=1}^N f_N(i,k_2,
\ldots,k_d)^2 \Biggr)^2} \qquad\bigl(\mbox{by
setting $j=i$ and using (\ref{mirrorfN})} \bigr)
\nonumber
\\
&&\quad\geq \max_{i=1,\ldots,N}\sum_{k_2,\ldots,k_d=1}^N
f_N(i,k_2,\ldots,k_d)^2.\nonumber
\end{eqnarray}
%

\begin{pf*}{Proof of Theorem \ref{NPRfree}}
Of course, only the
implication $\mathrm{(A)}\to\mathrm{(B)}$ has to be shown.
Assume that $\mathrm{(A)}$ holds. Then, using (\ref{4thmomentfree}) (condition
(i) implies in particular (\ref{mirrorfN})), we get that
$\|g_N{\mathop{\frown}^{d-1}} g_N\|_{L^2(\R_+^{2})}\to0$ as $N\to
\infty$.
Using (\ref{influ1}) and since $f_N$ is fully-symmetric, we deduce that
the quantity $\tau_N$ of Theorem \ref{MOOfree} tends to zero as $N$
goes to infinity. This, combined with assumption \textup{(A)} and (\ref
{MOOfree-eq}), leads to $\mathrm{(B)}$.
\end{pf*}

\textit{A counterexample}. In Theorem \ref{NPRfree}, can we replace the
role played by the semicircular distribution
by any other law? The answer is no in general. Indeed, let us take a
look at the following situation.
Fix $d=2$ and consider
\[
Q_N(x_1,\ldots,x_N)=\frac{1}{\sqrt{2N-2}}\sum
_{i=2}^N (x_1x_i+x_ix_1),\qquad
N\geq2.
\]
Let $S_1,S_2,\ldots$ be a sequence of free $\mathcal{S}(0,1)$ random
variables and
let $X_1,X_2,\ldots$ be a sequence of free Rademacher random variables
(i.e., the law of $X_1$ is
given by
$\frac12 \delta_1 + \frac12\delta_{-1}$).
Then, using the free central limit theorem, it is clear on one hand that
\begin{eqnarray*}
Q_N(X_1,\ldots,X_N) &=&\frac{1}{\sqrt{2}}
X_1 \Biggl(\frac{1}{\sqrt{N-1}}\sum_{i=2}^N
X_i \Biggr) + \frac{1}{\sqrt{2}} \Biggl(\frac{1}{\sqrt{N-1}}\sum
_{i=2}^N X_i
\Biggr)X_1
\\
& \stackrel{\mathrm{law}} {\to}& \frac{1}{\sqrt{2}} (X_1 S_1
+ S_1 X_1 )\qquad \mbox{as $N\to\infty$},
\end{eqnarray*}
with $X_1$ and $S_1$ freely independent.
By Proposition 1.10 and identity (1.10) of Nica and Speicher~\cite
{NicaSpeicherDuke}, it turns out
that $\frac{1}{\sqrt{2}} (X_1 S_1 + S_1 X_1 )\sim\mathcal{S}(0,1)$.
But, on the other hand,
\begin{eqnarray*}
Q_N(S_1,\ldots,S_N) &=&\frac{1}{\sqrt{2}}
S_1 \Biggl(\frac{1}{\sqrt{N-1}}\sum_{i=2}^N
S_i \Biggr) + \frac{1}{\sqrt{2}} \Biggl(\frac{1}{\sqrt{N-1}}\sum
_{i=2}^N S_i
\Biggr)S_1
\\
&\stackrel{\mathrm{law}} {=}& \frac{1}{\sqrt{2}} (S_1 S_2 +
S_2 S_1 ).
\end{eqnarray*}
The random variable $\frac{1}{\sqrt{2}} (S_1 S_2 + S_2 S_1 )$
being \textit{not} $\mathcal{S}(0,1)$ distributed
(its law is indeed the so-called \textit{tetilla law}, see \cite
{tetilla}), we deduce that one cannot replace
the role played by the semicircular distribution in Theorem \ref
{NPRfree} by the Rademacher distribution.

\textit{Another counterexample}. In Theorem \ref{NPRfree}, can we
replace the full symmetry assumption (i) by the mirror-symmetry
assumption? Unfortunately, we have not been able to answer this
question. But if the answer is yes, what is sure is that we cannot use
the same arguments as in the fully-symmetric case to show such a
result. Indeed,
when $f_N$ is fully-symmetric we have
\[
\tau_N = d \times\max_{i=1,\ldots,N}\sum
_{k_2,\ldots,k_d=1}^N f_N(i,k_2,
\ldots,k_d)^2,
\]
allowing us to prove Theorem \ref{NPRfree} by using the following set
of implications: as $N\to\infty$,
%
%
\begin{eqnarray}\label{implication}
Q_N(S_1,\ldots,S_N) \stackrel{\mathrm{law}} {\to} \mathcal{S}(0,1) &\stackrel{(\ref{4thmomentfree})} {\Longrightarrow}&
\|g_N{\mathop{\frown}^{d-1}} g_N
\|_{L^2(\R_+^{2})}\to0 \stackrel{( \ref{influ1})} {\Longrightarrow} \tau_N
\to0
\nonumber
\\[-8pt]
\\[-8pt]
\nonumber
&\stackrel{\mathrm{Theorem\ }\ref{MOOfree}} {\Longrightarrow} & Q_N(X_1,
\ldots,X_N) \stackrel{\mathrm{law}} {\to} \mathcal{S}(0,1)
.
\end{eqnarray}
Unfortunately, when $f_N$ is only mirror-symmetric the implication
%
%
\begin{equation}
\label{fausseimpl} \|g_N{\mathop{\frown}^{d-1}}
g_N\|_{L^2(\R_+^{2})} \to0\Longrightarrow\tau_N \to0,
\end{equation}
that plays a crucial role in (\ref{implication}), is no longer true in general.
To see why, let us consider the following counterexample (for which we
fix $d=3$). Define first a sequence of functions $f'_N\dvt\{1,\ldots
,N\}^2
\to\R$ according to the formula
\[
f_N'(i,i+1)=f_N'(i+1,i)=
\frac{1}{\sqrt{2N-2}},
\]
and $f_N'(i,j)=0$ whenever $i=j$ or $|j-i|\geq2$. Next, for $i,j,k \in
\{1,\ldots,N\}$, set
%
%
\begin{eqnarray}
\label{count-f-n} &&f_N(i,j,k)
\nonumber
\\[-8pt]
\\[-8pt]
\nonumber
&&\quad= \cases{ %
0, & \quad $\mbox{if } j\geq2 \mbox{ or } (j=1 \mbox{ and } i=1) \mbox{ or }
(j=1 \mbox{ and } k=1),$
\vspace*{2pt}\cr
f'_{N-1}(i-1,k-1), &\quad  $\mbox{otherwise}.$}
\end{eqnarray}
Easy-to-check properties of $f_N$ include mirror-symmetry, vanishing on
diagonals property,
\[
\sum_{i,j,k=1}^N f_N(i,j,k)^2
= \sum_{i,k=1}^{N-1} f'_{N-1}(i,k)^2=1
\]
and
%
%
\begin{eqnarray}
\label{contract} &&\sum_{i,j=1}^N \Biggl(
\sum_{k,l=1}^N f_N(i,k,l)
f_N(l,k,j) \Biggr)^2
\nonumber
\\[-8pt]
\\[-8pt]
\nonumber
&&\quad= \sum_{i,j=1}^N
\Biggl( \sum_{l=1}^{N-1} f'_{N-1}(i,l)
f'_{N-1}(l,j) \Biggr)^2\to0.
\end{eqnarray}
Let $g_N$ be given by (\ref{g-n}), that is,
\[
g_N = \frac{1}{\sqrt{2N-4}}\sum_{i=1}^{N-2}
(e_{i+1}\otimes e_1\otimes e_{i+2} +
e_{i+2}\otimes e_1\otimes e_{i+1} ).
\]
The limit (\ref{contract}) can be readily translated into
$\|g_N {\mathop{\frown}^{2}} g_N\|^2_{L^2(\R_+^2)} \to0$ as $N \to
\infty$.
On the other hand, we have
\begin{eqnarray*}
\tau_N&=&\max_{1\leq j\leq N} \operatorname{Inf}_j(f_N)=
\max_{1\leq
j\leq N} \sum_{i,k=1}^N \bigl\{
f_N(i,j,k)^2+f_N(j,i,k)^2+f_N(i,k,j)^2
\bigr\}
\\
&\geq&\max_{1\leq j\leq N} \sum_{i,k=1}^N
f_N(i,j,k)^2 = \sum_{i,k=1}^N
f_N(i,1,k)^2 = 1,
\end{eqnarray*}
which contradicts (\ref{fausseimpl}), as announced.

It is also worth noting that the sequence of functions $f_N$ defined by
(\ref{count-f-n}) provides an explicit counterexample to the so-called
\emph{Wiener-Wigner transfer principle} (see \cite{KNPS}, Theorem 1.8)
in a non-fully-symmetric situation. Indeed, on one hand, we have
\[
\|g_N {\mathop{\frown}^{1}} g_N
\|^2_{L^2(\R_+^2)}= \|g_N {\mathop{\frown}^{2}}
g_N\|^2_{L^2(\R
_+^2)} \to0 \qquad\mbox{as } N \to\infty,
\]
which, due to (\ref{4thmomentfree}), entails that $Q_N(S_1,\ldots,S_N)
\stackrel{\mathrm{law}}{\to} \mathcal{S}(0,1)$.
On the other hand,
let $G_1,\ldots,G_N \sim\mathcal{N}(0,1)$ be independent random
variables defined on a (classical) probability space $(\Omega,\mathcal
{F},P)$. One has
\[
Q_N(G_1,\ldots,G_N)=G_1\times
\Biggl( \frac{2}{\sqrt{2N-4}} \sum_{i=2}^{N-1}
G_{i}G_{i+1} \Biggr),
\]
and it is easily checked that $\frac{2}{\sqrt{2N-4}} \sum_{i=2}^{N-1}
G_{i}G_{i+1}\stackrel{\mathrm{law}}{\to} \mathcal{N}(0,2)$ (apply, e.g., the
Fourth Moment theorem of \cite{NP}). As a result, the sequence
$Q_N(G_1,\ldots,G_N)$ converges in law to $\sqrt{2} G_1G_2$,
which is not Gaussian. This leads to our desired contradiction.


\textit{Free CLT for homogeneous sums}. As an application of Theorem
\ref{MOOfree}, let us also highlight the following practical
convergence criterion for multilinear polynomials, which can be readily
derived from (\ref{4thmomentfree}).

%
\begin{thmm}
Let $(\mathcal{A},\varphi)$ be a non-commutative probability space.
Let $X_1,X_2,\ldots$ be a sequence of centered free random variables
with unit variance satisfying
$\sup_{i\geq1}\ff(|X_i|^r)<\infty$ for all $r\geq1$.
Fix $d\geq1$, and consider a sequence of functions $f_N\dvt\{1,\ldots
,N\}^d\to\R$
satisfying the three basic assumptions \textup{(i)--(ii)--(iii)} of Theorem \ref
{MOOfree}. Assume moreover that, as $N$ tends to infinity, $\max_{1\leq
j\leq N} \operatorname{Inf}_j(f_N)\to0$ and $\|g_N{\mathop{\frown}^{r}} g_N\|_{L^2(\R
_+^{2d-2r})}\to0$ for all $r\in\{1,\ldots,d-1\}$, where $g_N$ is
defined through (\ref{g-n}).
Then one has
%
%
\begin{equation}
\sum_{i_1,\ldots,i_d=1}^N
f_N(i_1,\ldots,i_d) X_{i_1}\cdots
X_{i_d}\stackrel{\mathrm{law}} {\to} \mathcal{S}(0,1).
\end{equation}
\end{thmm}

For instance, thanks to this result one can easily check that, given a
positive integer $k$, one has
\[
\frac{1}{\sqrt{N}} \sum_{i=1}^{N-k} \{
X_i X_{i+1}\cdots X_{i+k}+X_{i+k}X_{i+k-1}
\cdots X_i \}\stackrel{\mathrm{law}} {\to} \mathcal{S}(0,1) \qquad\mbox{as $N
\to\infty$}
\]
for any sequence $(X_i)$ of centered free random variables with unit
variance satisfying
$\sup_{i\geq1}\ff(|X_i|^r)<\infty$ for all $r\geq1$.

\section{\texorpdfstring{Proof of Theorem \protect\ref{MOOfree}}
{Proof of Theorem 1.3}}\label{secMOOfree}

As in \cite{MOO}, our strategy is essentially based on a generalization
of the classical Lindeberg method, which was originally designed for
linear sums of (classical) random variables (see \cite{lin}). Before we
turn to the details of the proof, let us briefly report the two main
differences with the arguments displayed in \cite{MOO} for commuting
random variables.

First, in this non-commutative context, we can no longer rely on some
classical Taylor expansion as a starting point of our study. This issue
can be easily overcome though, by resorting to abstract expansion
formulae (see~(\ref{bino})) together with appropriate H\"older-type
estimates (see~(\ref{holder})). As far as this particular point is
concerned, the situation is quite similar to what can be found in~\cite
{kargin}, even if the latter reference is only concerned with the
linear case, that is, $d=1$.

Another additional difficulty raised by this free background lies in
the transposition of the hypercontractivity property, which is at the
core of the procedure.
In \cite{MOO}, the proof of hypercontractivity for multilinear
polynomials heavily depends on the fact that the variables do commute
(see, e.g., the proof of \cite{MOO}, Proposition 3.11). Hence, new
arguments are needed here
and we postpone this point to Section \ref{subsechyper}.

\subsection{General strategy}
For the rest of the section, we fix two sequences $(X_i), (Y_i)$ of
random variables in a non-commutative probability space $(\mathcal
{A},\varphi)$, two integers $N,m\geq1$, as well as a function
$f_N\dvt\{
1,\ldots,N\}^d \to\R$ giving rise to a polynomial $Q_N$ through
(\ref
{qn}), and we assume that all of these objects meet the requirements of
Theorem \ref{MOOfree}. In accordance with the Lindeberg method, we are
first prompted to introduce some additional notation.

\textit{Notation}.
For every $i\in\{1,\ldots,N+1\}$, let us consider the vector
\[
Z^{N,(i)}:=(Y_1,\ldots,Y_{i-1},X_{i},
\ldots,X_N).
\]
In particular, $Z^{N,(1)}=(X_1,\ldots,X_N)$ and
$Z^{N+1,(N)}=(Y_1,\ldots
,Y_N)$, so that
%
%
\begin{equation}
\label{decom} Q_N(X_1,\dots,X_N)^m-Q_N(Y_1,
\ldots,Y_N)^m=\sum_{i=1}^{N}
\bigl[Q_N \bigl(Z^{N,(i)} \bigr)^m-Q_N
\bigl(Z^{N,(i+1)} \bigr)^m \bigr].
\end{equation}
Since the only difference between the vectors $Z^{N,(i)}$ and
$Z^{N,(i+1)}$ is their $i$th-component, it is readily checked that
\[
Q_N \bigl(Z^{N,(i)} \bigr)=U_N^{(i)}+V_N^{(i)}(X_i)
\quad\mbox{and}\quad Q_N \bigl(Z^{N,(i+1)} \bigr)=U_N^{(i)}+V_N^{(i)}(Y_i)
,
\]
where $U^{(i)}_N$ stands for the multilinear polynomial
\[
U^{(i)}_N:= \sum_{j_1,\ldots,j_d \in\{1,\ldots,N\} \setminus\{i\}}
f_N(j_1,\ldots,j_d) Z^{N,(i)}_{j_1}
\cdots Z^{N,(i)}_{j_d},
\]
and $V_N^{(i)}\dvt\ca\to\ca$ is the linear operator defined, for every
$x\in\ca$, by
\begin{eqnarray*}
&&V_N^{(i)}(x)
\\
&&\quad:=\sum_{l=1}^d \sum
_{j_1,\ldots,j_{d-1} \in\{1,\ldots,N\}\setminus\{i\}
} f_N(j_1,\ldots,j_{l-1},i,j_l
\ldots,j_{d-1}) Z^{N,(i)}_{j_1} \cdots
Z^{N,(i)}_{j_{l-1}} x Z^{N,(i)}_{j_l} \cdots
Z^{N,(i)}_{j_{d-1}}.
\end{eqnarray*}

\textit{Expansion}. Once endowed with the above notation, the problem
reduces to examining the differences
%
%
\begin{equation}
\label{diff} \ff \bigl( \bigl(U^{(i)}_N+V^{(i)}_N(X_i)
\bigr)^m \bigr)- \ff \bigl( \bigl(U^{(i)}_N+V^{(i)}_N(Y_i)
\bigr)^m \bigr)
\end{equation}
for $i\in\{1,\ldots,N-1\}$. In a commutative context, this could be
handled with the classical binomial formula. Although such a mere
formula is not available here, one can still assert that for every
$A,B\in\ca$,
%
%
\begin{equation}
\label{bino} (A+B)^m=A^m+\sum
_{n=1}^m \sum_{(r,\mathbf{i}_{r+1},\mathbf{j}_r)\in\cd
_{m,n}}
c_{m,n,r,\mathbf{i}_{r+1},\mathbf{j}_r} A^{i_1} B^{j_1} A^{i_2}
B^{j_2} \cdots A^{i_r} B^{j_r} A^{i_{r+1}},
\end{equation}
where
\[
\cd_{m,n}:= \Biggl\{(r,\mathbf{i}_{r+1},\mathbf{j}_r)
\in\{1,\ldots,m\} \times\N^{r+1}\times\N^r\dvt\sum
_{l=1}^{r+1} i_l=n , \sum
_{l=1}^{r} j_l=m-n \Biggr\}
\]
and the $c_{m,n,r,\mathbf{i}_{r+1},\mathbf{j}_r}$'s stand for
appropriate combinatorial coefficients (independent of $A$ and $B$).
The sets $\cd_{m,n}$ must of course be understood as follows: given
$(r,\mathbf{i}_{r+1},\mathbf{j}_r)\in\cd_{m,n}$, the product $ A^{i_1}
B^{j_1} A^{i_2} B^{j_2} \cdots A^{i_r} B^{j_r} A^{i_{r+1}}$ contains
$A$ exactly $n$ times
and $B$ exactly $(m-n)$ times, both counted with multiplicity.

Let us go back to (\ref{diff}) and let us apply formula (\ref{bino}) in
order to expand $(U^{(i)}_N+V^{(i)}_N(X_i))^m$ (resp.,
$(U^{(i)}_N+V^{(i)}_N(Y_i))^m$). The first and second order terms
(i.e., for $n=1,2$ in (\ref{bino})) of the resulting sum happen to
vanish, as a straightforward use of the following lemma shows.

%
\begin{lemma}\label{condi}
Let $Y$ and $Z$ be two centered random variables with unit variance.
Then, for every integer $k\geq1$ and every sequence $(X_i)$ of
centered freely independent random variables independent of $Y$ and
$Z$, one has
%
%
\begin{equation}
\label{ordre-un} \varphi( X_{i_1} \cdots X_{i_r} Y
X_{i_{r+1}} \cdots X_{i_k} )=\varphi( X_{i_1} \cdots
X_{i_r} Z X_{i_{r+1}} \cdots X_{i_k} )=0
\end{equation}
and
%
%
\begin{equation}
\label{ordre-deux}\hspace*{-6pt} \varphi( X_{i_1} \cdots X_{i_r} Y
X_{i_{r+1}} \cdots X_{i_s} Y X_{i_{s+1}} \cdots
X_{i_k} )=\varphi( X_{i_1} \cdots X_{i_r} Z
X_{i_{r+1}} \cdots X_{i_s} Z X_{i_{s+1}} \cdots
X_{i_k} )
\end{equation}
for all $0\leq r\leq s \leq k$ and $(i_1,\ldots,i_k)\in\N^k$.
\end{lemma}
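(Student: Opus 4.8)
The statement to be proved is Lemma~\ref{condi}: for centered $Y,Z$ of unit variance, freely independent of a family $(X_i)$ of centered freely independent variables, the one- and two-insertion traces satisfy \eqref{ordre-un} and \eqref{ordre-deux}. The plan is to reduce everything to the defining property \eqref{free-def} of free independence together with the moment bookkeeping it forces, never using anything about $Y,Z$ beyond $\ff(Y)=\ff(Z)=0$ and $\ff(Y^2)=\ff(Z^2)=1$. Concretely, I would first observe that $Y$ (resp.\ $Z$) lives in a subalgebra free from the subalgebra generated by all the $X_i$'s; the $X$-blocks $X_{i_1}\cdots X_{i_r}$ between successive $Y$'s can themselves be expanded, using centering, into sums of centered elements of the $X$-algebra, so it suffices to prove the two identities when each $X$-block is \emph{centered} (i.e.\ $\ff$ of it is $0$), up to explicitly tracking the scalar terms that split off.

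For \eqref{ordre-un}: write the $X$-block as $B + \ff(B)\1$ where $B=X_{i_1}\cdots X_{i_r}-\ff(X_{i_1}\cdots X_{i_r})\1$ (and similarly the trailing block), so that $\ff(X_{i_1}\cdots X_{i_r}\,Y\,X_{i_{r+1}}\cdots X_{i_k})$ decomposes into four terms. The terms where a scalar splits off reduce to $\ff(B' Y)$, $\ff(Y B'')$, or $\ff(Y)$, and by traciality and freeness each of these is $\ff(\text{centered }X)\ff(Y)=0$ or $\ff(Y)=0$; the one remaining term $\ff(B\,Y\,B'')$ has an alternating centered pattern (centered $X$, centered $Y$, centered $X$), hence vanishes by \eqref{free-def} directly — modulo the small care that $B$ and $B''$ come from the \emph{same} algebra, so I must cyclically rotate via $\ff(B\,Y\,B'')=\ff(B''B\,Y)$ and re-center the product $B''B$, iterating the splitting; since $\ff(Y)=0$ this iteration terminates with value $0$. (If $r=0$ or $r=k$, i.e.\ $Y$ is at an end, the same argument applies after a cyclic rotation.)

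For \eqref{ordre-deux}: the key point is that only $\ff(Y^2)$ and $\ff(Z^2)$ can enter, and these agree. I would again center each of the three $X$-blocks and expand. Whenever a block is nontrivially centered, the alternating pattern forces a vanishing contribution by \eqref{free-def} as above (here one uses that $Y$ itself is centered to kill patterns with an isolated $Y$, and uses freeness to kill patterns with a centered $X$-block genuinely sandwiched). The only surviving contributions are those in which the two $Y$'s become adjacent after all scalar $X$-factors have split off, producing a factor $\ff(Y^2)$ (possibly times further scalars coming from the $X$-blocks) — and crucially the \emph{combinatorial coefficient} of each such surviving term depends only on the indices $(i_1,\dots,i_k)$ and the positions $r,s$, not on $Y$. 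Replacing $Y$ by $Z$ changes $\ff(Y^2)\mapsto\ff(Z^2)$ and leaves every scalar $X$-moment untouched, so the two sides coincide term by term. It is cleanest to phrase this as: both sides equal $\ff(Y^2)\cdot\Phi$ (resp.\ $\ff(Z^2)\cdot\Phi$) for one and the same explicit expression $\Phi$ built from moments of the $X_i$'s; and $\ff(Y^2)=1=\ff(Z^2)$.

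\textbf{Main obstacle.} The delicate part is the bookkeeping in \eqref{ordre-deux}: making rigorous the claim that ``the only surviving terms are those contributing $\ff(Y^2)$, with a coefficient independent of $Y$.'' Two features complicate this: successive $X$-blocks belong to the \emph{same} subalgebra (the one generated by all $X_i$), so the ``alternating'' hypothesis of \eqref{free-def} is not literally satisfied and one must repeatedly re-center concatenations of $X$-blocks when a $Y$ degenerates into a scalar; and one must be sure that no term of the form $\ff(Y)\times(\cdots)$ or $\ff(Y)^2\times(\cdots)$ is silently being used — they vanish, but only because $Y$ is centered, which is exactly a hypothesis we are allowed to use. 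The honest way to handle this is an induction on $k$ (the number of $X$-factors): peel off the first block, split its scalar part, and invoke freeness on the genuinely centered remainder, so that the universal-coefficient statement is inherited from shorter words. Once that induction is set up, both identities follow; I expect \eqref{ordre-un} to be a strictly easier special case of the same machinery.
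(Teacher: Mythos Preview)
Your approach is correct but takes a genuinely different route from the paper's. The paper proceeds by induction on $k$, exploiting the \emph{mutual} free independence of the $X_i$'s: it first groups consecutive equal indices into powers $X_{i'_p}^{m_p}$ with $i'_{p+1}\neq i'_p$, then centers each power with $m_p\geq 2$ one at a time --- each split-off scalar producing a strictly shorter word handled by the induction hypothesis --- until what remains is an alternating product of centered elements drawn from the individual algebras $\langle X_{i'_p}\rangle$ and $\langle Y\rangle$, which vanishes by \eqref{free-def}; identity \eqref{ordre-deux} is then dismissed as ``a similar induction procedure.'' You instead ignore this internal structure and treat the full algebra generated by $(X_i)$ as a single subalgebra free from $Y$, centering whole $X$-blocks in one stroke. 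This uses strictly less hypothesis (the freeness of the $X_i$'s among themselves is never invoked) and, contrary to the worries in your ``main obstacle'' paragraph, requires no induction at all: for \eqref{ordre-un} a single rotation $\ff(B\,Y\,B'')=\ff(B''B\,Y)$ followed by one re-centering of $B''B$ already gives zero, and for \eqref{ordre-deux} the eight-term expansion closes in one pass --- terms like $\ff(B_1YB_2YB_3)$ or $\ff(B_1YB_2Y)$ are alternating and vanish directly, terms with a lone centered $B_j$ against $Y^2$ vanish because $\ff(B_j)=0$, and the survivors combine to the closed formula $\ff(A_1YA_2YA_3)=\ff(Y^2)\,\ff(A_2)\,\ff(A_1A_3)$, which visibly depends on $Y$ only through $\ff(Y^2)$. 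So your route is in fact the more economical one here; the paper's has the modest virtue of displaying explicitly how the alternating structure among the $X_i$'s themselves forces the vanishing.
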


\begin{pf}
Let us first focus on (\ref{ordre-un}). For $k=1$, this is obvious.
Assume that the result holds true up to $k-1$ and write
\[
\varphi( X_{i_1} \cdots X_{i_r} Y X_{i_{r+1}} \cdots
X_{i_k} )=\varphi \bigl( X_{i_1'}^{m_{1}} \cdots
X_{i_{r'}'}^{m_{r'}}Y X_{i_{r'+1}'}^{m_{r'+1}} \cdots
X_{i_{s'}'}^{m_{s'}} \bigr)
\]
with $i_{p+1}'\neq i_p'$ for $p\in\{1,\ldots,s'-1\} \setminus\{r'\}
$, $i'_{s'} \neq i'_1$ and $m_p\geq1$ for every $p\in\{1,\ldots,s'\}
$. Center successively every random variable
$X^{m_{p_1}}_{i'_{p_1}},\ldots,X^{m_{p_t}}_{i'_{p_t}}$ for which
$m_{p_i}\geq2$: together with an induction argument, this yields
\begin{eqnarray*}
&&\varphi \bigl( X_{i_1'}^{m_{1}} \cdots X_{i_{r'}'}^{m_{r'}}Y
X_{i_{r'+1}'}^{m_{r'+1}} \cdots X_{i_{s'}'}^{m_{s'}} \bigr)
\\
&&\quad= \varphi \bigl( X_{i_1'}\cdots X_{i_{p_1-1}'} \bigl(X_{i'_{p_1}}^{m_{p_1}}-
\varphi \bigl( X_{i'_{p_1}}^{m_{p_1}} \bigr) \bigr)X_{i_{p_1+1}'}^{m_{p_1+1}}
\cdots X_{i_{r'}'}^{m_{r'}}Y X_{i_{r'+1}'}^{m_{r'+1}} \cdots
X_{i_{s'}'}^{m_{s'}} \bigr)
\\
&&\quad=\varphi \bigl( X_{i_1'}\cdots X_{i_{p_1-1}'} \bigl(X_{i'_{p_1}}^{m_{p_1}}-
\varphi \bigl( X_{i'_{p_1}}^{m_{p_1}} \bigr) \bigr)X_{i_{p_1+1}'}
\cdots X_{i_{p_2-1}'} \bigl(X_{i'_{p_2}}^{m_{p_2}}-\varphi \bigl(
X_{i'_{p_2}}^{m_{p_2}} \bigr) \bigr)
\\
&&\hspace*{30pt}X_{i_{p_2+1}'}^{m_{p_2+1}}\cdots X_{i_{r'}'}^{m_{r'}}Y
X_{i_{r'+1}'}^{m_{r'+1}} \cdots X_{i_{s'}'}^{m_{s'}} \bigr) =
\cdots=0
\end{eqnarray*}
owing to free independence. Identity (\ref{ordre-deux}) can be easily
derived from a similar induction procedure.
\end{pf}

Let us go back to the proof of Theorem \ref{MOOfree}. As a consequence
of the previous lemma, it now suffices to establish that, either for
$W=X_i$ or for $W=Y_i$, one has, as soon as $\sum_l j_l \geq3$,
%
%
\begin{equation}
\label{suffit} \hspace*{-6pt}\bigl|\varphi\bigl( \bigl(U_N^{(i)}
\bigr)^{i_1} \bigl(V^{(i)}_N(W)
\bigr)^{j_1} \bigl(U_N^{(i)} \bigr)^{i_2}
\bigl(V^{(i)}_N(W) \bigr)^{j_2} \cdots
\bigl(U_N^{(i)} \bigr)^{i_r} \bigl(V^{(i)}_N(W)
\bigr)^{j_r} \bigr) \bigr| \leq c_{m,d} \operatorname{Inf}_i
(f_N)^{3/2}
\end{equation}
for some constant $c_{m,d}$. Indeed, in this case, by combining (\ref
{decom}), (\ref{bino}) and (\ref{suffit}) with the identities in the
statement of Lemma \ref{condi}, we get
\begin{eqnarray*}
\bigl|\varphi\bigl(Q_N(X_1,\dots,X_N)^m
\bigr)-\varphi\bigl(Q_N(Y_1,\ldots ,Y_N)^m
\bigr)\bigr|&\leq& C_{m,d} \sum_{i=1}^{N}
\operatorname{Inf}_i (f_N)^{3/2}
\\
&\leq& C_{m,d} \tau_N^{1/2} \sum
_{i=1}^N \operatorname{Inf}_i
(f_N) = C_{m,d} \tau_N^{1/2},
\end{eqnarray*}
which is precisely the expected bound of Theorem \ref{MOOfree}.

In order to prove (\ref{suffit}), let us first resort to the following
H\"older-type inequality, borrowed from~\cite{kargin}, Lemma 12:
%
%
\begin{eqnarray}
\label{holder}
&&\bigl|\varphi \bigl( \bigl(U^{(i)}_N
\bigr)^{i_1} \bigl(V^{(i)}_N(W)
\bigr)^{j_1} \cdots \bigl(U^{(i)}_N
\bigr)^{i_r} \bigl(V^{(i)}_N(W)
\bigr)^{j_r} \bigr)\bigr|
\nonumber\qquad
\\[-8pt]
\\[-8pt]
\nonumber
&&\quad\leq \varphi \bigl( \bigl(U^{(i)}_N \bigr)^{2^ri_1}
\bigr)^{2^{-r}} \varphi \bigl( \bigl(V^{(i)}_N(W)
\bigr)^{2^r j_1} \bigr)^{2^{-r}} \cdots\varphi \bigl(
\bigl(U^{(i)}_N \bigr)^{2^ri_r} \bigr)^{2^{-r}}
\varphi \bigl( \bigl(V^{(i)}_N(W) \bigr)^{2^r
j_r}
\bigr)^{2^{-r}}.\qquad
\end{eqnarray}
Now, let the key (forthcoming) Proposition \ref{prophyp} come into the
picture. Thanks to it, we can simultaneously assert that, for every
$p\geq1$,
\[
\varphi \bigl( \bigl(U^{(i)}_N \bigr)^{2p} \bigr)
\leq C_{p,d} \quad\mbox{and}\quad \varphi \bigl( V_N^{(i)}(X_i)^{2p}
\bigr) \leq C_{p,d} \cdot\operatorname{Inf}_i
(f_N)^p
\]
for some constant $C_{p,d}$. Going back to (\ref{holder}), we deduce
that for every $(j_l)$ such that $\sum_l j_l\geq3$,
\begin{eqnarray*}
\bigl|\varphi\bigl( \bigl(U^{(i)}_N\bigr)^{i_1}
\bigl(V^{(i)}_N(X_i)\bigr)^{j_1} \cdots
\bigl(U^{(i)}_N\bigr)^{i_r} \bigl(V^{(i)}_N(X_i)
\bigr)^{j_r} \bigr)\bigr| &\leq& C'_{r,d} \cdot
\operatorname{Inf}_i(f_N)^{2^{-1} (j_1+\cdots
+j_r)}
\\
&\leq& C'_{r,d} \cdot\operatorname{Inf}_i(f_N)^{3/2}
\end{eqnarray*}
since $\operatorname{Inf}_i(f_N)\leq1$, and so the proof of Theorem
\ref{MOOfree} is done.

\subsection{Hypercontractivity}\label{subsechyper}
In order to prove the forthcoming Proposition \ref{prophyp} (which
played an important role in the proof of Theorem \ref{MOOfree}),
we first need a technical lemma. To state it, a few additional
notation must be introduced.

%
\begin{definition}
Fix integers $n_1,\ldots,n_r \geq1$. Any set of disjoint blocks of
points in $\{1,\ldots,n_1+\cdots+n_r\}$ is called a \emph{graph} of
$\{
1,\ldots,n_1+\cdots+n_r\}$. A graph is \emph{complete} if the union of
its blocks covers the whole set $\{1,\ldots,n_1+\cdots+n_r\}$. Besides,
a graph is said to \emph{respect} $n_1\otimes\cdots\otimes n_r$ if
each of its blocks contains at most one point in each set $\{1,\ldots
,n_1\}$, $\{n_1+1,\ldots,n_2\},\ldots,\{n_1+\cdots
+n_{r-1}+1,\ldots
,n_1+\cdots+n_r\}$.

Finally, we denote by $\mathcal{G}_\ast(n_1\otimes\cdots\otimes n_r)$
the set of graphs respecting $n_1\otimes\cdots\otimes n_r$ and
containing no singleton (i.e., no block with exactly one element), and
by $\mathcal{G}_\ast^c(n_1\otimes\cdots\otimes n_r)$ the subset of
complete graphs in $\mathcal{G}_\ast(n_1\otimes\cdots\otimes n_r)$.
\end{definition}

Now, given a graph $\ga$ of $\{1,\ldots,n\}$ with $p$ vertices
($p\leq
n$) and a function $f\dvt\{1,\ldots,\break N\}^n \to\R$, we call \emph
{contraction} of $f$ with respect to $\ga$ the function $C_\ga(f)\dvt
\{
1,\ldots,N\}^{n-p} \to\R$ defined for every $(j_1,\ldots,j_{n-p})$ by
the formula
\begin{eqnarray*}
&&C_\ga(f) (j_1,\ldots,j_{n-p})\\
&&\quad:= \sum
_{i_1,\ldots,i_p=1}^N f(j_1,\ldots
,i_1,\ldots,i_p,\ldots,j_{n-p})\cdot\delta(
\ga, j_1,\ldots,i_1,\ldots,i_p,
\ldots,j_{n-p}),
\end{eqnarray*}
where:
\begin{itemize}
\item the (fixed) positions of the $i_k$'s in $(j_1,\ldots
,i_1,\ldots,i_p,\ldots,j_{n-p})$ correspond to the positions of the
vertices of $\ga$;

\item $\delta( \ga,j_1,\ldots,i_1,\ldots,i_p,\ldots,j_{n-p}
)=1$ if all $i_k,i_l$ in a same block of $\ga$ are equal, and $0$ otherwise.
\end{itemize}

With these notation in hand, we can prove the following lemma.

%
\begin{lemma}\label{lemtechn}
For every $\ga\in\mathcal{G}_\ast(n_1 \otimes\cdots\otimes n_r)$
and all $f_i \in\ell^2(\{1,\ldots,N\}^{n_i})$ ($i=1,\ldots,r$), one has
\[
\bigl\| C_\ga(f_1 \otimes\cdots\otimes f_r)
\bigr\|_{\ell^2} \leq\prod_{i=1}^r
\|f_i\|_{\ell^2}.
\]
\end{lemma}

\begin{pf}
We use an induction procedure on $r$. When $r=1$, $C_\ga(f_1)=f_1$. Fix
now $r\geq2$ and $\ga\in\mathcal{G}_\ast(n_1 \otimes\cdots
\otimes
n_r)$. Denote by $\tilde{\ga} \in\mathcal{G}_\ast(n_2 \otimes
\cdots
\otimes n_r)$ the restriction of $\ga$ to $n_2 \otimes\cdots\otimes
n_r$ (i.e., the graph that one obtains from $\ga$ by getting rid of
the blocks with vertices in $\{1,\ldots,n_1\}$). If $\ga$ has no vertex
in $\{1,\ldots,n_1\}$, then
\[
C_\ga(f_1 \otimes\cdots\otimes f_r)=f_1
\otimes C_{\tilde{\ga
}}(f_2\otimes\cdots\otimes f_r)
\]
and we can conclude by induction. Otherwise, it is easily seen that
$ \| C_\ga(f_1 \otimes\cdots\otimes f_r) \|_{\ell^2}^2$ can be
decomposed as
\begin{eqnarray*}
&&\bigl\| C_\ga(f_1 \otimes\cdots\otimes f_r)
\bigr\|_{\ell^2}^2\\
&&\quad =\sum_{i_1,\ldots,i_l,j_1,\ldots,j_m} \biggl(
\sum_{k_1,\ldots
,k_q}f_1(i_1,
\ldots,k_1,\ldots,k_q,\ldots,i_l)\\
&&\qquad\hspace*{83pt}{}\times
C_{\tilde{\ga}}(f_2 \otimes\cdots\otimes f_r)
(j_1,\ldots, k_{\si(1)},\ldots,k_{\si
(p)},
\ldots,j_m) \biggr)^2,
\end{eqnarray*}
where:
\begin{itemize}
\item $l$ (resp., $m$) is the number of points in $\{1,\ldots
,n_1\}$
(resp., $\{n_1+1,\ldots,n_1+\cdots+n_r\}$) which are not assigned by
$\ga$;

\item in $f_1(i_1,\ldots,k_1,\ldots,k_q,\ldots, i_l)$, the (fixed)
positions of the $k_i$'s correspond to the positions of the $q$
vertices of $\ga$ in $\{1,\ldots,n_1\}$;

\item $\si\dvt\{1,\ldots,p\} \to\{1,\ldots,q\}$ ($p\geq q$)
is a
surjective mapping, meaning that each $k_i$ appears at least once in
$(k_{\si(1)},\ldots,k_{\si(p)})$. Here, we use the fact that $\ga$
respects $n_1\otimes\cdots\otimes n_r$ and contains no singleton.
\end{itemize}

Then, by applying Cauchy--Schwarz inequality over the set of indices
$(k_1,\ldots,k_q)$, we get
\[
\bigl\| C_\ga(f_1 \otimes\cdots\otimes f_r)
\bigr\|_{\ell^2}^2 \leq\|f_1\|_{\ell^2}^2
\bigl\| C_{\tilde{\ga}}(f_2 \otimes\cdots\otimes f_r)
\bigr\|_{\ell^2}^2,
\]
where we have used (possibly several times) the trivial property: for
any $g\dvt\{1,\ldots,N\}^2 \to\R$, $\sum_{k=1}^N g(k,k)^2 \leq
\sum_{k_1,k_2=1}^N g(k_1,k_2)^2$. We can now conclude by induction.
\end{pf}

Let us finally turn to the proof of
Proposition \ref{prophyp}, which is the hypercontractivity property
for homogeneous sums of free random variables.
We shall use
Lemma \ref{lemtechn} as a main ingredient.
The following elementary lemma will also be needed at some point.

%
\begin{lemma}\label{unif-bo}
For every integer $r\geq1$ and every sequence $X=(X_i)$ of random
variables, one has $|\varphi( X_{i_1} \cdots X_{i_{2r}} ) | \leq
\mu^X_{2^{r-1}}$, where $\mu^X_k:=\sup_{1\leq l\leq k , i\geq1}
\varphi
( X_i^{2l} )$.
\end{lemma}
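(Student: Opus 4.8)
The plan is to prove Lemma \ref{unif-bo} by a straightforward induction on $r$, reducing a product of $2r$ variables to a product of $2r-1$ variables by centering one carefully chosen factor, so that the exponent in the bound on $\varphi(X_i^{2l})$ (which controls $\mu^X_k$) at most doubles at each inductive step and starts at $1$.

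First I would treat the base case $r=1$: here $|\varphi(X_{i_1}X_{i_2})|\leq \sqrt{\varphi(X_{i_1}^2)\varphi(X_{i_2}^2)}\leq \mu^X_1$ by Cauchy--Schwarz and the definition $\mu^X_1=\sup_{i\geq 1}\varphi(X_i^2)$. For the inductive step, suppose the bound holds for all products of $2(r-1)$ variables with constant $\mu^X_{2^{r-2}}$. Given a product $X_{i_1}\cdots X_{i_{2r}}$ of $2r$ factors, first group consecutive repeated indices into powers, writing it as $X_{j_1}^{m_1}\cdots X_{j_s}^{m_t}$ with $j_{p}\neq j_{p+1}$ and $\sum m_p=2r$; if some $m_p\geq 2$, center that block (replace $X_{j_p}^{m_p}$ by $(X_{j_p}^{m_p}-\varphi(X_{j_p}^{m_p})\mathbf{1})+\varphi(X_{j_p}^{m_p})\mathbf{1}$) and bound $\varphi(X_{j_p}^{m_p})\leq \sqrt{\varphi(X_{j_p}^{m_p \cdot ?})\cdots}$ — more simply, use $|\varphi(X_{j_p}^{m_p})|\leq \varphi(X_{j_p}^{2\lceil m_p/2\rceil})^{?}$; in any case $|\varphi(X_{j_p}^{m_p})|\leq \mu^X_{2^{r-1}}$ for the range of exponents that occur, and the remaining term has strictly fewer factors. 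If instead all $m_p=1$, then the product has alternating distinct indices; pick two equal ones if any coincide (using the trace / rearrangement freedom is not allowed in general, so one must be careful here), otherwise apply the free-independence-free pure Cauchy--Schwarz / Hölder estimate: by the dyadic Hölder inequality analogous to (\ref{holder}) one gets $|\varphi(X_{i_1}\cdots X_{i_{2r}})|\leq \prod_{k}\varphi(X_{i_k}^{2^{?}})^{2^{-?}}$, and choosing the splitting so that each factor is raised to a power at most $2^{r-1}$ with the total weight equal to $1$ yields exactly the bound $\mu^X_{2^{r-1}}$.

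Concretely, the cleanest route is to split the $2r$-fold product in half: write $P=(X_{i_1}\cdots X_{i_r})(X_{i_{r+1}}\cdots X_{i_{2r}})=:AB$ and apply Cauchy--Schwarz to get $|\varphi(AB)|\leq \varphi(AA^\ast)^{1/2}\varphi(B^\ast B)^{1/2}$, then iterate this dyadic splitting $r-1$ times until each factor is a single $X_{i_k}$ raised to the power $2^{r-1}$ inside a trace, at which point each contributes $\varphi(X_{i_k}^{2^{r-1}})^{2^{-(r-1)}}\leq (\mu^X_{2^{r-1}})^{2^{-(r-1)}}$, and the product of the $2r$ such factors gives $\bigl(\mu^X_{2^{r-1}}\bigr)^{2r\cdot 2^{-(r-1)}}$. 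One must check the bookkeeping: actually the standard dyadic Hölder estimate over a product of $2^r$ factors gives exponents $2^r$ and weights $2^{-r}$, so to match $2r$ factors one either pads with identities or, more honestly, uses that $\varphi(|C|^2)$ for $C$ a product of $k$ variables is again a trace of a product of $2k$ variables and reapplies the inequality. The main obstacle is precisely this exponent/weight bookkeeping — making sure the iterated Cauchy--Schwarz (or a single application of the Hölder inequality from \cite{kargin}) lands on the exponent $2^{r-1}$ and not something larger, and handling the non-commutativity so that no illegitimate cyclic rearrangement is used; once the splitting is set up correctly the estimate is immediate, and the trace property $\varphi(XY)=\varphi(YX)$ is available whenever it is needed to bring a product into the form $\varphi(CC^\ast)$.
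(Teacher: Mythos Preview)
Your ``cleanest route'' paragraph has the right starting move --- split $X_{i_1}\cdots X_{i_{2r}}$ as $AB$ with $A=X_{i_1}\cdots X_{i_r}$ and apply Cauchy--Schwarz --- and this is exactly how the paper proceeds. But the rest of the proposal does not close the argument, and the earlier material is a dead end.

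First, the centering idea in your opening paragraph goes nowhere: the lemma makes \emph{no} free-independence assumption on the $X_i$, so replacing $X_{j_p}^{m_p}$ by its centered version does not simplify $\varphi(\cdots)$ in any way. That whole line should be dropped.

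Second, in the dyadic approach your bookkeeping is genuinely wrong, as you yourself suspect. After one Cauchy--Schwarz, $\varphi(AA^\ast)$ is the trace of a product of $2r$ letters, not $r$; there is no mechanism by which repeated halving produces a single $X_{i_k}^{2^{r-1}}$, and your resulting exponent $2r\cdot 2^{-(r-1)}$ is not $1$ except when $r=2$. The step you are missing is this: with $A=X_{i_1}\cdots X_{i_r}$ one has
\[
\varphi(AA^\ast)=\varphi\big(X_{i_1}\cdots X_{i_{r-1}}X_{i_r}^2 X_{i_{r-1}}\cdots X_{i_1}\big)=\varphi\big(X_{i_1}^2\, X_{i_2}\cdots X_{i_{r-1}}\,X_{i_r}^2\, X_{i_{r-1}}\cdots X_{i_2}\big),
\]
where the second equality uses traciality once to merge the two outer $X_{i_1}$'s. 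This is now a product of exactly $2(r-1)$ variables, but drawn from the augmented sequence $X^{(2)}:=(X_1,X_1^2,X_2,X_2^2,\ldots)$. Apply the inductive hypothesis at level $r-1$ to \emph{that} sequence: $\varphi(AA^\ast)\le \mu^{X^{(2)}}_{2^{r-2}}$. Since $(X_i^2)^{2l}=X_i^{4l}$ and $4\cdot 2^{r-2}=2^r=2\cdot 2^{r-1}$, one has $\mu^{X^{(2)}}_{2^{r-2}}\le \mu^{X}_{2^{r-1}}$, and the same for $\varphi(B^\ast B)$. Taking square roots gives the bound with the correct exponent $2^{r-1}$. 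This merge-and-augment trick is the whole content of the lemma; once you see it, there is nothing left to check.
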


\begin{pf}
For $r=1$, this corresponds to Cauchy--Schwarz inequality (see \cite
{nicaspeicher}). Assume that the result holds true up to $r-1$ ($r\geq
2$) for any sequence of random variables. By using Cauchy--Schwarz
inequality, we first get
%
%
\begin{eqnarray}
\label{cau-s}
&&\bigl|\varphi( X_{i_1} \cdots X_{i_{2r}} )\bigr|
\nonumber
\\
&&\quad=\bigl |\varphi \bigl( (X_{i_1} \cdots X_{i_{r}})
(X_{i_{r+1}} \cdots X_{i_{2r}}) \bigr)\bigr|
\\
&&\quad\leq \varphi \bigl( X_{i_1}^2 \cdots X_{i_{r-1}}X_{i_{r}}^2
X_{i_{r-1}} \cdots X_{i_2} \bigr)^{1/2}\varphi \bigl(
X_{i_{r+1}}^2 \cdots X_{i_{2r-1}}X_{i_{2r}}^2
X_{i_{2r-1}} \cdots X_{i_{r+2}} \bigr)^{1/2}.\nonumber
\end{eqnarray}
Denote by $X^2$ the sequence $X_1,X_1^2,X_2,X_2^2,\ldots.$ Then by
induction, we deduce from (\ref{cau-s}) that $|\varphi( X_{i_1}
\cdots X_{i_{2r}} )|\leq\mu^{X^2}_{2^{r-2}} \leq\mu^X_{2^{r-1}}$,
which concludes the proof.
\end{pf}

%
\begin{proposition}\label{prophyp}
Let $X_1,\ldots,X_N$ be centered freely independent random variables
and denote by $(\mu^N_k)$ the sequence of larger even moments, that is,
$\mu_k^N:=\sup_{1\leq i\leq N , 1\leq l\leq k} \varphi( X_i^{2l}
)$. Fix $d\geq1$, and consider a sequence of functions $f_N\dvt\{
1,\ldots,N\}^d\to\R$
satisfying the three basic assumptions \textup{(i)--(ii)--(iii)} of Theorem \ref
{MOOfree}. Define $Q_N$ through (\ref{qn}).
Then for every $r\geq1$, there exists a constant $C_{r,d}$ such that
%
%
\begin{equation}
\label{hyp-cont} \varphi \bigl( Q_N(X_1,
\ldots,X_N)^{2r} \bigr) \leq C_{r,d}
\mu^N_{2^{rd-1}} \Biggl( \sum_{j_1,\ldots,j_d=1}^N
f_N(j_1,\ldots,j_d)^2
\Biggr)^r.\vadjust{\goodbreak}
\end{equation}
\end{proposition}
\begin{pf}
The argument is in spirit quite close to ideas of \cite{kemp-speicher}.
Owing to Lemma \ref{condi}, it holds that
\begin{eqnarray*}
&&\varphi\bigl( Q_N(X_1,\ldots,X_N)^{2r}
\bigr)
\\
&&\quad= \mathop{\mathop{\sum_{1\leq j_1^1,\ldots,j^1_d \leq N}}_{
\vdots}
}_{ 1\leq j^{2r}_1,\ldots,j^{2r}_d \leq N} f_N\bigl(j^1_1,
\ldots,j^1_d\bigr) \cdots f_N
\bigl(j^{2r}_1,\ldots,j^{2r}_d\bigr)
\varphi\bigl( (X_{j_1^1} \cdots X_{j^1_d} )\cdots(X_{j^{2r}_1}
\cdots X_{j^{2r}_d}) \bigr)
\\
&&\quad= \sum_{(j^1_1,\ldots,j^{2r}_d) \in\ca_{2rd}^N} f_N\bigl(j^1_1,
\ldots ,j^1_d\bigr) \cdots f_N
\bigl(j^{2r}_1,\ldots,j^{2r}_d\bigr)
\varphi\bigl( (X_{j_1^1} \cdots X_{j^1_d} )\cdots(X_{j^{2r}_1}
\cdots X_{j^{2r}_d}) \bigr),
\end{eqnarray*}
where we have set, for every $R\geq1$,
\[
\ca_R^N:= \bigl\{(j_1,
\ldots,j_R) \in\{1,\ldots,N\}^R\dvt\mbox{for each }
i_1, \mbox{ there exists } i_2\neq i_1
\mbox{ such that } j_{i_1}=j_{i_2} \bigr\}.
\]
Bounding each term of the form $\varphi( (X_{j_1^1} \cdots
X_{j^1_d} )\cdots(X_{j^{2r}_1} \cdots X_{j^{2r}_d}) )$ of this sum
by means of Lemma \ref{unif-bo} leads to
\[
\varphi \bigl( Q_N(X_1,\ldots,X_N)^{2r}
\bigr) \leq\mu^N_{2^{rd-1}} \sum_{(j^1_1,\ldots,j^{2r}_d) \in\ca_{2rd}^N}
\bigl| f_N \bigl(j^1_1,\ldots,j^1_d
\bigr)\bigr| \cdots\bigl| f_N \bigl(j^{2r}_1,
\ldots,j^{2r}_d \bigr)\bigr|.
\]
Recall the notation $\mathcal{G}_\ast^c(d^{\otimes2r})$ and $C_\ga$
from the beginning of Section \ref{subsechyper}.
By taking into account that $f_N$ is assumed to vanish on diagonals, it
is easily seen that the above sum is equal to
\[
\sum_{(j^1_1,\ldots,j^{2r}_d) \in\ca_{2rd}^N} \bigl| f_N \bigl(j^1_1,
\ldots,j^1_d \bigr)\bigr| \cdots\bigl| f_N
\bigl(j^{2r}_1,\ldots,j^{2r}_d
\bigr)\bigr|=\sum_{\ga\in\mathcal
{G}_\ast^c(d^{\otimes2r})} C_\ga \bigl(
|f_N|^{\otimes2r} \bigr).
\]
Therefore, we may apply Lemma \ref{lemtechn} so as to deduce that
\[
\varphi \bigl( Q_N(X_1,\ldots,X_N)^{2r}
\bigr) \leq\mu^N_{2^{rd-1}}\cdot\bigl|\mathcal{G}_\ast^c
\bigl(d^{\otimes2r} \bigr)\bigr|\cdot\|f_N\|^{2r}_{\ell^2(\{
1,\ldots,N\}^d)},
\]
which is precisely (\ref{hyp-cont}) with $C_{r,d}=|\mathcal{G}_\ast^c(d^{\otimes2r})|$.
\end{pf}

\section*{Acknowledgements} We are grateful to Todd
Kemp for helpful comments and references about
hypercontractivity. Special thanks go to Roland Speicher,
who suggested a shorter proof for the hypercontractivity
property (Proposition \ref{prophyp}). Finally, we
thank an anonymous referee for a careful reading and
for his/her positive comments and constructive remarks.

This work has been supported in part by the two following (french) ANR
grants: `Exploration des Chemins Rugueux'
[ANR-09-BLAN-0114] and `Malliavin, Stein and Stochastic Equations with
Irregular Coefficients'
[ANR-10-BLAN-0121].

%


\printhistory

\end{document}